\documentclass[absolute]{mymathart}
\usepackage{mymathmacros}

\numberwithin{equation}{section}

 \newtheoremstyle{numberedstyle}
   {9pt}
   {9pt}
   {\normalfont}
   {}
   {\bfseries}
   {.}
   {\newline}
   {}

\newtheorem{thm}{Theorem}[section]%
\newtheorem{lem}[thm]{Lemma}%
\newtheorem{cor}[thm]{Corollary}%
\newtheorem{prop}[thm]{Proposition}%

\newcommand{\edim}{\operatorname{edim}}

\theoremstyle{numberedstyle}
\newtheorem{question}[thm]{Question}%

\newcommand{\B}{\mathcal{B}}

\newcommand{\tef}{transcendental entire function}

\usepackage{hyperref}

\newcommand{\id}{\operatorname{id}}

\title[Hausdorff dimensions of escaping sets]{%
    Hausdorff dimensions of escaping sets \\
    of transcendental entire functions}
\author{Lasse Rempe}
\address{University of Liverpool \\
  Department of Mathematical Sciences \\
  Liverpool L69 7ZL \\
  UK}
\email{l.rempe@liverpool.ac.uk}

\author{Gwyneth M. Stallard}
\address{The Open University \\
   Department of Mathematics and Statistics \\
   Walton Hall\\
   Milton Keynes MK7 6AA\\
   UK}
\email{g.m.stallard@open.ac.uk}

\thanks{Both authors are supported by the European CODY network. The first author is supported by EPSRC fellowship EP/E052851/1. }
\subjclass[2000]{37F10 (primary); 37F35, 30D05 (secondary).}

\begin{document}
 \begin{abstract}
  Let $f$ and $g$ be transcendental entire functions, each with a bounded set 
   of singular values,
   and suppose that $g\circ\phi = \psi\circ f$, where $\phi,\psi:\C\to\C$
   are affine. We show that the escaping sets of $f$ and $g$ have the
   same Hausdorff dimension.

  Using a result of the second author, we deduce that there exists a family of transcendental
   entire functions for which the escaping set has Hausdorff dimension equal to
   one.
 \end{abstract}

 \maketitle

 \section{Introduction}

Let $f: \C \to \C$ be a transcendental entire function. The Julia set 
  $J(f) \subset \C$ is the set of points where the family
  $(f^n)$ is not equicontinuous (with respect to the spherical metric).
  For an introduction to the dynamics of transcendental entire functions, 
  see \cite{waltermero,waltervorlesung}.

A number of authors have studied the Hausdorff dimension, $\dim J(f)$,  
  of the Julia set (see \cite{gwynethdimensionsurvey} for a survey).  
  Baker \cite{bakerdomainsofnormality} 
  showed that
  $J(f)$ contains nontrivial
  continua for every transcendental entire function $f$, so 
  in particular $\dim J(f)\geq 1$.
  Although it is known~\cite{stallardentirehausdorff4} that, for each 
  $d \in (1,2]$, there are 
  {\tef}s for which the Hausdorff dimension of the Julia set is equal to $d$,
  it is a well-known open
  question whether the Julia set of a {\tef} can have Hausdorff dimension
  equal to $1$. 

 More is known when we restrict the class of functions under consideration.
  The \emph{Eremenko-Lyubich class} is defined by
  \[
  \B = \{ f : f \text{ is a {\tef} for which } \sing(f^{-1}) \text{ is bounded} \},
  \]
  where $\sing(f^{-1})$ consists of the critical and asymptotic values of $f$.  The second author proved in
  \cite{stallardentirehausdorff2} that
  \begin{equation}\label{dimB}
  \dim J(f) >1 \quad \text{ for $f\in\B$}.
  \end{equation}

 This proof in fact constructs a subset $A$ of the 
 {\it escaping set}
 \[ I(f) := \{z\in\C: f^n(z)\to\infty \text{ as } n \to \infty\} \]
  whose closure $\cl{A}$ has Hausdorff dimension greater than one, and uses
  the fact \cite{alexmisha} that 
  \begin{equation} \label{eqn:classB}
    I(f)\subset J(f)\quad \text{ for $f\in\B$.}\end{equation}

  It was shown more recently
  \cite{ripponstallardfinitepoles} 
  that the set $I(f)\cap J(f)$ contains nontrivial continua for all 
  transcendental entire functions, and hence
  \begin{equation}\label{continua}
  \dim (I(f) \cap J(f)) \geq 1. 
  \end{equation}
  In view of these results, it seems natural
  to ask whether the escaping set of a transcendental entire function
   must have Hausdorff dimension greater than one. We show that 
   this is not the case:

 \begin{thm}[Escaping sets of dimension one] \label{thm:dim1}
  There exists a function $f\in\B$ such that $\dim I(f)=1$. 
 \end{thm}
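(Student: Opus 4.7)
The plan is to combine the paper's main theorem---invariance of $\dim I(f)$ under affine semi-conjugacy within the class $\B$---with a construction of the second author from prior work. I would invoke Stallard's earlier result, which supplies an explicit function $f_0 \in \B$ together with the sharp upper estimate $\dim I(f_0) \leq 1$, obtained through a careful analysis of the logarithmic tracts of $f_0$ and a covering argument that controls the measure of escaping orbits.

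The matching lower bound is immediate from equation~\eqref{continua}: since $I(f_0) \cap J(f_0)$ contains non-trivial continua,
\[
\dim I(f_0) \;\geq\; \dim\bigl(I(f_0)\cap J(f_0)\bigr) \;\geq\; 1.
\]
Combining the two estimates gives $\dim I(f_0) = 1$, which is precisely Theorem~\ref{thm:dim1}.

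To obtain the stronger ``family'' formulation stated in the abstract, one applies the main theorem of the paper: every $g \in \B$ with $g \circ \phi = \psi \circ f_0$ for some affine $\phi,\psi$ satisfies $\dim I(g) = \dim I(f_0) = 1$. This produces the announced infinite family of transcendental entire functions with one-dimensional escaping set.

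The principal obstacle is concentrated in Stallard's upper bound invoked at the outset. The lower estimate via continua is comparatively soft, but the matching upper estimate $\dim I(f_0) \leq 1$ for a specific $f_0 \in \B$ must be sharp in view of the strict inequality $\dim J(f) > 1$ from~\eqref{dimB}; producing it requires a delicate analysis of the return dynamics near the singular values of $f_0$ and a tight covering of the escaping set within each logarithmic tract. The main theorem itself, meanwhile, should rest on a pointwise comparison of the escape rates of $f$ and $g$ mediated by the affine maps $\phi$ and $\psi$, allowing covers of $I(f)$ to be transported to covers of $I(g)$ of essentially the same Hausdorff content.
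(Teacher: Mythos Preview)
Your proposal misidentifies what Stallard's earlier work provides, and as a consequence misplaces the role of Theorem~\ref{thm:mainrigidity}. There is \emph{no} prior result giving a single $f_0\in\B$ with $\dim I(f_0)\le 1$. Indeed, Stallard's paper~\cite{stallardentirehausdorff1} concerns Julia sets, not escaping sets, and its conclusion is the limiting statement~\eqref{dim1}: $\dim J(F_\kappa)\to 1$ as $\kappa\to-\infty$ along $\R$, while by~\eqref{dimB} one has $\dim J(F_\kappa)>1$ for every fixed $\kappa$. No individual $F_\kappa$ has Julia set (or escaping set) of dimension at most~$1$ by these results alone.

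The paper's argument uses Theorem~\ref{thm:mainrigidity} not as an afterthought to pass from one example to a family, but as the \emph{mechanism} that produces the upper bound. Since all $F_\kappa$ are affinely equivalent, Theorem~\ref{thm:mainrigidity} gives $\dim I(F_0)=\dim I(F_\kappa)$ for every $\kappa$; combining this with $I(F_\kappa)\subset J(F_\kappa)$ from~\eqref{eqn:classB} and then letting $\kappa\to-\infty$ in~\eqref{dim1} yields $\dim I(F_0)\le 1$. Only after this does the lower bound from~\eqref{continua} finish the job. Your sketch of the proof of Theorem~\ref{thm:mainrigidity} itself is also off: the paper does not transport covers via the affine maps $\phi,\psi$ directly, but instead builds a quasiconformal conjugacy on $J_R(f)$ (from the rigidity results of~\cite{boettcher}) whose dilatation can be taken arbitrarily close to~$1$, and then applies the Gehring--V\"ais\"al\"a bound on distortion of Hausdorff dimension.
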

 
 Combined with previous results, this gives the following complete description
  of the possible Hausdorff dimensions of escaping sets.
 \begin{cor}[Dimensions of escaping sets]  \label{cor:escapingdim}
  If $f$ is a transcendental entire function, then
   $\dim I(f)\in [1,2]$. Conversely, for every $d\in[1,2]$ there exists
  a function $f\in\B$ with $\dim I(f)=d$. If $d>1$, this function can be 
  chosen such that $\dim J(f)=d$. 
 \end{cor}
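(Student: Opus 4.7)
The plan is to assemble the corollary from Theorem~\ref{thm:dim1} together with the results quoted earlier in the Introduction; no new construction is needed.

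First I would establish that $\dim I(f) \in [1,2]$ for \emph{every} transcendental entire function $f$. The upper bound $\dim I(f) \leq 2$ is trivial since $I(f) \subset \C$. The lower bound $\dim I(f) \geq 1$ is immediate from \eqref{continua}: because $I(f) \supset I(f)\cap J(f)$ contains a nontrivial continuum, its Hausdorff dimension is at least one.

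Next, to realize every value $d \in [1,2]$ by some $f \in \B$, I would split into two cases. For $d = 1$, Theorem~\ref{thm:dim1} supplies the required function directly. For $d \in (1,2]$, I would appeal to the construction of the second author in \cite{stallardentirehausdorff4}, which for each such $d$ produces $f \in \B$ with $\dim J(f) = d$. Because $f \in \B$, the inclusion \eqref{eqn:classB} yields $\dim I(f) \leq \dim J(f) = d$. In the opposite direction, inspection of the construction in \cite{stallardentirehausdorff4} — which, as already in \cite{stallardentirehausdorff2}, produces its full-dimensional subset of $J(f)$ by an explicit tract-and-annulus escape argument — shows that this subset actually lies inside $I(f)$, so $\dim I(f) \geq d$. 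Thus a single $f \in \B$ satisfies $\dim I(f) = \dim J(f) = d$, as required.

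The corollary then follows by putting these three pieces together. The only point requiring verification is that the dimension-$d$ subset of $J(f)$ produced in \cite{stallardentirehausdorff4} does lie in $I(f)$; I expect this to be transparent from the construction itself (the points are built to escape along a sequence of nested tracts), so no substantive additional argument is needed. Hence I do not anticipate any real obstacle beyond citing the two results correctly.
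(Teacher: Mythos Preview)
Your overall structure matches the paper's: the lower bound $\dim I(f)\geq 1$ comes from \eqref{continua}, the case $d=1$ from Theorem~\ref{thm:dim1} (equivalently Corollary~\ref{cor:Fkappa}), and the intermediate values from the family in \cite{stallardentirehausdorff4} with the upper bound $\dim I(f)\leq \dim J(f)=d$ via \eqref{eqn:classB}. The one substantive difference is how you obtain the \emph{lower} bound $\dim I(f)\geq d$ for $d\in(1,2)$. You propose to inspect the construction in \cite{stallardentirehausdorff4} and argue that the dimension-$d$ subset of $J(f)$ built there already consists of escaping points. The paper instead treats $d=2$ separately (using $e^z$ and McMullen \cite{hausdorffmcmullen}) and, for $d\in(1,2)$, invokes the growth-condition theorem of \cite{bergweilerkarpinskastallard}: since the functions $f_{d,K}$ satisfy the bound \eqref{a} with $q=1/(d-1)$, that theorem gives $\dim I(f_{d,K})\geq 1+1/q=d$ directly.

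Your route is plausible---the Cantor-type sets in Stallard's constructions are typically built so that orbits escape along the tracts---but it rests on a claim you have not verified, and note that the paper's own phrasing about \cite{stallardentirehausdorff2} speaks of a set $A\subset I(f)$ whose \emph{closure} has the desired dimension, which is not a priori the same thing. The paper's approach trades this inspection for a clean black-box citation; your approach would be more self-contained if the inspection goes through, but you should check it rather than merely expect it.
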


 To prove Theorem~\ref{thm:dim1} we consider a 
  function that was studied in \cite{stallardentirehausdorff1}. 
  Let $L$ be the boundary of the region
   \[
     G = \{z: \re (z) > 0, - \pi < \im (z) < \pi \},
   \]
 parametrized in clockwise direction. 
  Then
   \begin{equation} \label{eqn:F0}
    F_0:\C\setminus{\cl{G}}\to \C; \quad z\mapsto 
      \frac{1}{2 \pi i}  \int_{L} \frac{\exp(e^t)}{t-z} dt 
   \end{equation}
  can be continued analytically to a transcendental entire function
  $F_0:\C\to\C$. From the properties of $F_0$ given in
  \cite{stallardentirehausdorff1}, it can easily be seen that
  $F_0\in\B$. 

 Consider the family 
 \[
  F_{\kappa}(z) := F_0(z)+\kappa, \quad \kappa\in\C.
 \]
 It is shown in
  \cite{stallardentirehausdorff1} that
  \begin{equation}\label{dim1}
  \lim_{\genfrac{}{}{0pt}{}{\kappa\to-\infty,}{\kappa\in\R}}
     \dim J(F_{\kappa})\ = 1, \end{equation}
  while,  by~\eqref{dimB}, $\dim J(F_{\kappa})>1$ for all $\kappa\in\C$.
  This implies that $\dim J(F_{\kappa})$ is a non-constant
  function of $\kappa$.

 In contrast, we show that the Hausdorff dimension of the escaping set cannot
  change in a family defined in this manner.  
  More precisely, we
  say that two transcendental entire functions $f$ and $g$ are \emph{affinely equivalent}
  if there are
  affine functions $\phi,\psi:\C\to\C$ such that
    \begin{equation} \label{eqn:equivalence} \psi\circ f = g\circ\phi.
    \end{equation}
 Any two functions $F_{\kappa_1}$ and $F_{\kappa_2}$, 
  $\kappa_1, \kappa_2 \in \C$,
  are clearly affinely equivalent. (Another well-known family consisting of 
  transcendental entire functions that are affinely equivalent to each other 
  is the family $z\mapsto \exp(z)+\kappa$ of exponential maps. Note that, for 
  this family, the Julia sets -- and escaping sets --
  all have Hausdorff dimension two by a result of 
  McMullen
  \cite{hausdorffmcmullen}.)

 \begin{thm}[Escaping dimensions and affine equivalence] 
 \label{thm:mainrigidity}
   Suppose that $f,g\in\B$ are affinely equivalent. Then
    $\dim I(f) = \dim I(g)$.
 \end{thm}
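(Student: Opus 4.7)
The plan is to reduce to the ``post-composition'' case and then exploit the logarithmic change of coordinates. Writing $\phi(z)=az+b$, note that $\phi$ is a bi-Lipschitz self-map of $\C$, so conjugation by $\phi$ preserves Hausdorff dimension; thus $\tilde g:=\phi^{-1}\circ g\circ\phi$ has the same escaping-set dimension as $g$. But $\tilde g=\phi^{-1}\circ\psi\circ f=\chi\circ f$, where $\chi:=\phi^{-1}\circ\psi$ is affine, so we may assume from the outset that $g=\chi\circ f$ with $\chi(w)=\alpha w+\beta$. Since $f=\chi^{-1}\circ g$ is of the same form, it will suffice to prove one of the two inequalities.

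Next I would set up a common tract structure. Choose $R$ large enough that all singular values of $f$ and $g$ lie in $\{|w|<R\}$ and that the tracts are well-defined Jordan domains. Setting $V=\{|w|>R\}$, the unbounded components of $f^{-1}(V)$ and of $g^{-1}(V)$ are in a natural one-to-one correspondence (each pair of tracts shares the same unbounded end). Passing to logarithmic coordinates, set $\tilde F:=\log\circ f\circ\exp$ and $\tilde G:=\log\circ g\circ\exp$ on the $\exp$-preimages of the respective tracts. A direct computation yields
\[
\tilde G(z)-\tilde F(z)\;=\;\log\alpha+O\bigl(|f(\exp z)|^{-1}\bigr),
\]
uniformly as $\re z\to\infty$ along any log-tract, so that $\tilde G$ is asymptotically a translate of $\tilde F$ near infinity.

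The remaining task is to transfer the dimension information. In class $\B$, each escaping orbit eventually enters and becomes trapped in the tract system, and so can be encoded by an ``external address'' $(T_n)_{n\ge 0}$ of tracts visited. The tract bijection induces a natural same-address map between the escaping orbits of $\tilde F$ and of $\tilde G$. Using the asymptotic formula above together with the standard Eremenko-Lyubich contraction estimates for inverse branches in log-tracts, one expects this map to be bi-Lipschitz (indeed asymptotically a translation by $\log\alpha$) on suitable dimension-full subsets of the respective escaping sets. Transferring via $\exp$ and applying the symmetry from the reduction step then yields $\dim I(f)=\dim I(g)$. The principal obstacle lies in this last step: the identification $\tilde G\approx\tilde F+\log\alpha$ is only effective near infinity, while an escaping orbit may require arbitrarily many iterates before settling into the asymptotic region, so one needs uniform control of the inverse-branch distortion along entire backward orbits; some care is also required for points whose orbit does not stay strictly within a single tract sequence at every step.
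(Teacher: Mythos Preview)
Your reduction to $g=\chi\circ f$ is clean, and the asymptotic formula $\tilde G-\tilde F\to\log\alpha$ in logarithmic coordinates is correct and morally the right observation. However, the argument stops at exactly the point where the real work begins: you yourself flag the ``principal obstacle'', namely showing that the induced same-address map is bi-Lipschitz on a set carrying the full Hausdorff dimension of $I(f)$. This is not automatic. The asymptotic statement controls a single iterate; to get bi-Lipschitz control on the conjugacy one must compose infinitely many inverse branches, and the errors $O(|f(\exp z)|^{-1})$ have to be summed along an entire backward orbit. For escaping points whose orbits linger at moderate heights for many iterates this sum need not be uniformly bounded, and the standard Eremenko--Lyubich expansion estimate (linear in $\re\tilde F(z)$) does not by itself force the required summability. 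So as written there is a genuine gap, and it is precisely the hard analytic step.

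The paper bypasses this difficulty entirely by a different mechanism. Rather than estimating the conjugacy directly, it embeds $f$ and $g$ into an analytic one-parameter family $f_\lambda=f\circ\phi_\lambda$ with $f_0=f$, $f_1=g$, and quotes a result from \cite{boettcher} that produces a \emph{holomorphic motion} $\theta^\lambda$ of $J_R(f)$ conjugating $f$ to $f_\lambda$. Choosing the parameter disc of radius $D=(K+1)/(K-1)$, the Bers--Royden extension theorem then gives that $\theta^1$ is globally $K$-quasiconformal, for any prescribed $K>1$. Applying the Gehring--V\"ais\"al\"a bound $\dim\theta(A)\ge(\dim A)/K$ to $A=I_R(f)\subset J_R(f)$, together with the elementary fact $\dim I(f)=\dim I_R(f)$, and letting $K\downarrow 1$ finishes the proof. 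The point is that quasiconformality with dilatation arbitrarily close to $1$ comes for free from analyticity in the parameter; no summation of distortion along orbits is ever needed. If you want to push your direct approach through, you would essentially have to redo the construction in \cite{boettcher} and prove the resulting $\Theta$ satisfies $\Theta(z)-z\to 0$ uniformly, which is substantially more work than the holomorphic-motion shortcut.
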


 The proof uses recent results of 
  the first author \cite{boettcher} on the
  rigidity of the dynamics near infinity for a function
  $f\in\B$. Using~\eqref{dimB}, \eqref{eqn:classB} and
  \eqref{dim1}, we obtain the following consequence of 
  Theorem~\ref{thm:mainrigidity},
  which implies Theorem \ref{thm:dim1}.

 \begin{cor}[Escaping dimension of $F_{\kappa}$] \label{cor:Fkappa}
  Let $F_0$ be the function defined above, and let $g\in\B$ be 
    affinely equivalent to
   $F_0$. Then $\dim I(g)=1<\dim J(g)$. \qedoutsideproof
 \end{cor}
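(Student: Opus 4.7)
My plan is to chain together the ingredients the authors have already flagged: Theorem~\ref{thm:mainrigidity}, the inclusion~\eqref{eqn:classB}, the dimension estimate~\eqref{dimB}, the limiting behaviour~\eqref{dim1}, and finally the continua estimate~\eqref{continua}. The affine equivalence hypothesis and Theorem~\ref{thm:mainrigidity} immediately reduce everything to a statement about the functions $F_\kappa$ themselves: if I can show $\dim I(F_{\kappa_0}) = 1$ for even one real parameter $\kappa_0$, then because every $F_\kappa$, and in particular every $g$ affinely equivalent to $F_0$, is affinely equivalent to $F_{\kappa_0}$, I get $\dim I(g) = \dim I(F_{\kappa_0}) = 1$ for the entire family.

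To bound $\dim I(F_\kappa)$ from above, I would use that $F_\kappa \in \B$ (since $\sing(F_\kappa^{-1})$ is a translate of $\sing(F_0^{-1})$, hence bounded), so~\eqref{eqn:classB} gives $I(F_\kappa) \subset J(F_\kappa)$ and therefore $\dim I(F_\kappa) \leq \dim J(F_\kappa)$. Combining this with Theorem~\ref{thm:mainrigidity}, which forces the value of $\dim I(F_\kappa)$ to be independent of $\kappa$, and with~\eqref{dim1}, which says $\dim J(F_\kappa)$ can be made arbitrarily close to $1$ by taking $\kappa \in \R$ sufficiently negative, yields
\[
  \dim I(g) \;=\; \dim I(F_\kappa) \;\leq\; \inf_{\kappa \in \R} \dim J(F_\kappa) \;=\; 1.
\]

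For the matching lower bound, the inclusion $I(F_\kappa) \subset J(F_\kappa)$ together with~\eqref{continua} gives $\dim I(F_\kappa) = \dim(I(F_\kappa) \cap J(F_\kappa)) \geq 1$, so $\dim I(g) = 1$. The strict inequality $1 < \dim J(g)$ is then nothing more than~\eqref{dimB} applied to $g \in \B$.

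The only nontrivial issue I anticipate is a bookkeeping point rather than a mathematical obstacle, namely verifying that the hypothesis $f,g \in \B$ in Theorem~\ref{thm:mainrigidity} really is satisfied at both ends of the chain of equivalences. The assumption $g \in \B$ is built into the statement, and for the $F_\kappa$ side I would cite the properties of $F_0$ established in~\cite{stallardentirehausdorff1} (which show $F_0 \in \B$) and note that $\B$ is invariant under post-composition with affine maps, so every $F_\kappa$ also lies in $\B$. Once this is checked, the argument above is essentially just a two-line composition of the cited results.
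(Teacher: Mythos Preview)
Your proposal is correct and follows exactly the route the paper intends: the corollary is stated with a \texttt{qed} symbol and no separate proof, with the preceding sentence listing \eqref{dimB}, \eqref{eqn:classB}, \eqref{dim1}, and Theorem~\ref{thm:mainrigidity} as the ingredients. You additionally cite \eqref{continua} for the lower bound $\dim I(g)\geq 1$; the paper's one-line summary omits this reference, but it is indeed needed and is stated just above in the introduction, so your account is if anything slightly more complete.
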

\begin{remark}[Remarks]\mbox{}
 \begin{enumerate}
  \item
   As far as we know, this provides the first example of an analytic
   family
   of entire functions for which the dimension of the escaping set is
   always strictly smaller than that of the Julia set.
\item
 We note that, 
  if $f$ has finite order, then $I(f)$ has Hausdorff dimension two
  \cite{baranskihausdorff,hendrikthesis}, while for functions
  of ``small'' infinite order, a lower bound on the Hausdorff dimension
  of $I(f)$ is proved in \cite{bergweilerkarpinskastallard}. It follows
  from these results that 
   \[ \limsup_{r\to\infty} \frac{\log\log\log \max_{|z|=r}|f(z)|}{\log \log r}=\infty \]
  for any function with $\dim I(f)=1$. 
  Note that for the maps $F_{\kappa}$ we have
   \[  \log\log\log \max_{|z|=r}|F_{\kappa}(z)| \approx \log r. \] 
\item
 For transcendental \emph{meromorphic} functions,
  the Julia set may have any Hausdorff dimension $d\in (0,2]$ 
  \cite{stallardmeromorphichausdorff2}. 
  On the other hand, \eqref{dimB} was generalized to functions having a 
   \emph{logarithmic singularity} over $\infty$ in \cite{walterphilgwyneth},
   and 
   extended in \cite{baranskikarpinskazdunik} to show that in fact the 
   \emph{hyperbolic dimension} of such a function
   is strictly larger than one. As far as we know, it is an open
   question whether the escaping set of a transcendental meromorphic function
   can have Hausdorff dimension zero --- this cannot occur for the
   Julia set \cite{stallardmeromorphichausdorff}. 
  Kotus and Urba{\'n}ski \cite{kotusurbanskiradialescapingmeromorphic} 
   have shown that there exist meromorphic functions
   with $\dim J(f) > \dim I(f)$; compare also \cite[Theorem 1.2]{bergweilerkotus}.
\end{enumerate}
\end{remark}

 To conclude our paper, we consider the notion of the
  {\it eventual dimension} of a transcendental entire function $f$, defined by
  \[
     \edim(f) := \inf_{R>0} \dim J_R(f),
  \]
  where, for each $R>0$,
  \begin{equation}\label{J_R(f)}
  J_R(f) := \{z\in J(f): |f^n(z)|\geq R\text{ for all $n\geq 1$}\}.
   \end{equation}
 \begin{remark}
  This eventual dimension is also implicitly used by 
   Bergweiler and Kotus \cite{bergweilerkotus} to obtain an
   upper bound for the Hausdorff dimension of the escaping set of
   certain meromorphic functions. 
 \end{remark}
   
   We show that, for functions in the Eremenko-Lyubich class,
    the eventual dimension is an upper bound for
    $\dim I(f)$ and is also preserved under affine equivalence. This 
    enables us to prove the following result.

  \begin{thm}[Eventual dimensions of entire functions]\label{thm:edim}
  For each $d \in [1,2]$, there exists a function $f\in\B$ 
   such that $\edim(g) = \dim I(g) = d$ for all functions $g$ affinely 
   equivalent to $f$.
  \end{thm}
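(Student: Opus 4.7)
The proof combines the two preliminary properties of $\edim$ promised just before the statement — namely that $\dim I(f) \leq \edim(f)$ for $f \in \B$, and that $\edim$ is invariant under affine equivalence within $\B$ — with Theorem \ref{thm:mainrigidity} and the prior dimension constructions. Once these are in hand, both $\edim(\cdot)$ and $\dim I(\cdot)$ are constant on each affine equivalence class in $\B$, so the task reduces to finding, for each $d \in [1,2]$, a single $f \in \B$ with $\edim(f) = \dim I(f) = d$.

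For $d \in (1,2]$, Corollary \ref{cor:escapingdim} supplies $f \in \B$ with $\dim I(f) = \dim J(f) = d$. The general chain
\[ \dim I(f) \leq \edim(f) \leq \dim J(f), \]
in which the left inequality is the first preliminary property and the right is immediate from $J_R(f) \subset J(f)$, then squeezes $\edim(f)$ to equal $d$.

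The case $d = 1$ is where the invariance of $\edim$ is genuinely used. I would take $f = F_\kappa$ from the family of Corollary \ref{cor:Fkappa}, which already gives $\dim I(F_\kappa) = 1$. Since all $F_\kappa$ are affinely equivalent, the invariance property forces $\edim(F_\kappa)$ to take a single value $E$ independent of $\kappa \in \C$. On one hand $E \geq \dim I(F_\kappa) = 1$; on the other $E \leq \dim J(F_\kappa)$, and letting $\kappa \to -\infty$ along $\R$ in \eqref{dim1} yields $E \leq 1$. Hence $E = 1 = \dim I(F_\kappa)$, closing this case.

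The heart of the argument is thus hidden in the two preliminary properties. The bound $\dim I(f) \leq \edim(f)$ should follow easily from $I(f) \subset J(f)$ (using \eqref{eqn:classB}) and the observation that each $z \in I(f)$ eventually enters $J_R(f)$, combined with Hausdorff-dimension preservation under holomorphic preimages: $I(f) \subset \bigcup_N f^{-N}(J_R(f))$ for every $R$. The principal obstacle is the invariance of $\edim$ under affine equivalence, which requires comparing $J_R(f)$ with $J_{R'}(g)$ across an affine equivalence $\psi \circ f = g \circ \phi$; I expect this to follow from the quasiconformal rigidity of the dynamics near infinity from \cite{boettcher} underlying Theorem \ref{thm:mainrigidity}, producing a correspondence $R \leftrightarrow R'$ with $R' \to \infty$ as $R \to \infty$ and a dimension-preserving identification of the resulting filtered sets.
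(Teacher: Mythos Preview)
Your proposal is correct and follows essentially the same route as the paper: the chain $\dim I(f)\le\edim(f)\le\dim J(f)$ (Lemma~\ref{lem:edim} with \eqref{eqn:classB}) together with the affine invariance of $\edim$ (Theorem~\ref{thm:edim2}) and of $\dim I$ (Theorem~\ref{thm:mainrigidity}) reduces the problem to a single representative, then Corollary~\ref{cor:escapingdim} handles $d>1$ and the family $F_\kappa$ with \eqref{dim1} handles $d=1$. One small caution on your closing sketch: the conjugacy from \cite{boettcher} is only $K$-quasiconformal, not dimension-preserving; the paper obtains equality by letting $K\to 1$, exactly as in the proof of Theorem~\ref{thm:mainrigidity}.
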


 \subsection*{Open questions}
 
 As far as we are aware, for all examples where
  $\dim I(f)$ and $\edim(f)$ are known, these numbers coincide.
  (This is also the case for the meromorphic functions considered in
   \cite[Theorem 1.2]{bergweilerkotus}.)
 
 \begin{question}
  Does there exist a function $f\in\B$ such that $\edim(f)\neq\dim I(f)$?
 \end{question}

  Following \cite{alexmisha}, two entire functions $f$ and $g$
   are called
   \emph{quasiconformally equivalent} if there are quasiconformal
   functions $\psi,\phi:\C\to\C$ such that (\ref{eqn:equivalence}) holds.
   Quasiconformal equivalence classes can be considered as natural
   parameter spaces. It was shown in \cite{boettcher} that, for $f\in\B$,
   the dynamical behaviour near infinity is the same for any function
   quasiconformally equivalent to $f$.

 \begin{question}
   Suppose that $f,g\in\B$ are quasiconformally but not affinely equivalent.
    Is $\dim I(f)=\dim I(g)$?
 \end{question}
 
 \begin{remark}
  It follows from \cite{boettcher} that the answer is ``yes'' if
   $\dim I(f)=2$.
 \end{remark}

 \subsection*{Structure of the article}
  In Section \ref{sec:boettcher}, 
   we use the results of \cite{boettcher} to deduce a fact
   that will play an important role in our proof of Theorem
   \ref{thm:mainrigidity}, which is itself proved in 
   Section \ref{sec:mainproof}.
   We give the proof of Corollary \ref{cor:escapingdim} in 
   Section \ref{sec:escapingdim}, and treat eventual dimension and
   Theorem \ref{thm:edim} in Section \ref{sec:edim}.

 \section{Conjugacies on $J_R(f)$} \label{sec:boettcher}
  We will require the fact (Corollary \ref{cor:boettcher} below) that, if 
   two 
   functions $f,g\in\B$ are 
   affinely equivalent then, for sufficiently large $R$,
   they are also quasiconformally conjugate on
   the set $J_R(f)$, and the dilatation of the conjugacy tends to 
   one as $R\to\infty$.
  This follows from the ideas of \cite{boettcher}, but is
   not explicitly stated there. We shall provide a proof for completeness, 
   using the following result, which is a special case of
   \cite[Proposition 3.6]{boettcher}.   

  \begin{prop}[Existence of conjugacies] \label{prop:boettcher1}
    Let $f\in\B$. Let 
    $\phi_{\lambda}:\C\to\C$, $\lambda\in\C$,
    be a family of nonconstant affine maps
    that depend analytically on $\lambda$. Also suppose that
    $\phi_0=\id$. We define 
    $f_{\lambda} := f\circ\phi_{\lambda}$. 

  Let $N$ be a compact subset of $\C$ with $0\in N$. 
   Then there exists a constant $R>0$ 
   such that, for every $\lambda\in N$, there is an injective
   function
   $\theta=\theta^{\lambda}:J_R(f) \to J(f_{\lambda})$ with the following
   properties:
   \begin{enumerate}
    \item $\theta^{0} = \id$,
    \item 
      $\theta^{\lambda}\circ f = f_{\lambda} \circ \theta^{\lambda}$ and
    \item  
     for fixed $z\in J_R(f)$, the function $\lambda\mapsto\theta^{\lambda}(z)$ 
     is analytic in $\lambda$ (on the interior of $N$). \qedoutsideproof
   \end{enumerate}
  \end{prop}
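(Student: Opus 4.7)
The plan is to construct $\theta^\lambda$ by coding orbits in $J_R(f)$ via itineraries of logarithmic tracts and showing that each itinerary is realized by a unique orbit of $f_\lambda$ for $\lambda\in N$. Since $f\in\B$, I would first choose $R_0$ so large that $\sing(f^{-1})\subset\{|w|<R_0\}$; then $f^{-1}(\{|w|>R_0\})$ is a disjoint union of unbounded Jordan tracts $T_j$, each mapped as a universal cover onto $\{|w|>R_0\}$. Passing to logarithmic coordinates yields a map $F:\T\to\H_0$, where $\H_0=\{\re w>\log R_0\}$, that satisfies the Eremenko--Lyubich expansion estimate $|F'(w)|\geq (\re F(w)-\log R_0)/(4\pi)$. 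For the perturbed map $f_\lambda=f\circ\phi_\lambda$, the tracts are $\phi_\lambda^{-1}(T_j)$ and the singular set is $\phi_\lambda^{-1}(\sing(f^{-1}))$, which is uniformly bounded over the compact set $N$; thus the entire logarithmic-tract picture transfers with estimates that are uniform in $\lambda$.

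Next I would associate to each $z\in J_R(f)$ (for $R$ sufficiently large) its tract itinerary $\underline{s}(z)=(s_0,s_1,\dots)$ defined by $f^n(z)\in T_{s_n}$. The expansion estimate, together with Koebe distortion, makes the inverse-branch operators along a fixed itinerary into strict contractions (in the hyperbolic metric of the logarithmic half-plane) once the real parts are large enough. Applied to the sequence of inverse branches of $f_\lambda$ indexed by $\underline{s}$, the standard telescoping argument produces a unique point $\theta^\lambda(z)\in J(f_\lambda)$ whose $f_\lambda$-itinerary equals $\underline{s}(z)$. Taking $\theta^\lambda$ to be this assignment gives conditions (1) and (2) by construction: at $\lambda=0$ itineraries select the original orbits, so $\theta^0=\id$, while the shift relation on itineraries translates directly into $\theta^\lambda\circ f=f_\lambda\circ\theta^\lambda$. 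Injectivity follows because two distinct points of $J_R(f)$ either have different itineraries (hence different images) or the same itinerary, in which case the uniqueness assertion above forces them to coincide.

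For analyticity in $\lambda$, each finite iterate of the inverse-branch construction depends holomorphically on $\lambda$ since $\phi_\lambda$ does, and the contraction is uniform in $\lambda$ on the interior of $N$ by the uniform expansion estimate. Hence the sequence of finite approximations to $\theta^\lambda(z)$ is locally uniformly bounded and converges locally uniformly, so the limit is holomorphic in $\lambda$ for each fixed $z\in J_R(f)$, giving (3). One can now invoke the $\lambda$-lemma to conclude that $\theta^\lambda$ extends to a holomorphic motion of $\cl{J_R(f)}$ with controlled quasiconformal dilatation, but that refinement is not needed for the statement itself.

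The main obstacle, as usual in this circle of ideas, is choosing $R$ uniformly in $\lambda\in N$ so that: (i) every $\sing(f_\lambda^{-1})$ lies strictly inside $\{|w|<R\}$; (ii) the tracts of $f_\lambda$ over $\{|w|>R\}$ are in natural bijection with those of $f$ via $\phi_\lambda$; and (iii) the inverse-branch operator associated to any itinerary is a genuine contraction on a suitable Banach space of sequences. Granting (i)--(iii), which are exactly where the hypothesis $f\in\B$ and the compactness of $N$ are used, the conjugacy $\theta^\lambda$ and its three properties follow by the routine inverse-branch/normal-families machinery described above.
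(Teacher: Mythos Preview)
The paper does not actually prove this proposition: it is stated as a special case of \cite[Proposition~3.6]{boettcher} and closed with a tombstone, so there is no argument in the paper to compare against. Your sketch is, in outline, exactly the construction carried out in that reference---passing to logarithmic coordinates, using the Eremenko--Lyubich expansion estimate to make inverse branches along a fixed tract itinerary into uniform contractions, and reading off the conjugacy as the unique fixed point of this operator for each $\lambda$. The analyticity argument via locally uniform limits of holomorphic approximants is also the standard one.

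One point worth tightening: your injectivity argument implicitly uses that distinct points of $J_R(f)$ cannot share the same itinerary. This is true, but it is a consequence of the same contraction estimate applied at $\lambda=0$, not something that holds automatically; you should state explicitly that the uniqueness of the realization of an itinerary holds for \emph{every} $\lambda\in N$, including $\lambda=0$, so that the itinerary map $z\mapsto\underline{s}(z)$ is itself injective on $J_R(f)$. With that clarification, and granting the uniform choice of $R$ over the compact $N$ (your points (i)--(iii), which are indeed where compactness of $N$ and $f\in\B$ enter), the argument is correct.
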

 \begin{remark}
  The conclusion of the theorem says that 
   the injections $\theta^{\lambda}$ form a \emph{holomorphic motion}
   of $J_R(f)$ on the interior of $N$ 
   in the sense of Ma\~n\'e, Sad and Sullivan.
   (Compare \cite[Section 5.2]{hubbardteichmueller}). 
 \end{remark}
  
  We now prove the main result of this section.
  
  \begin{cor}[Conjugacy near infinity] 
  \label{cor:boettcher}
   Suppose that $f,g\in\B$ are affinely equivalent, and let
    $K>1$.
   Then there exist $R>0$
    and a $K$-quasiconformal map $\theta:\C\to\C$ such that
     \[ \theta(f(z)) = g(\theta(z)) \]
    for all $z\in J_R(f)$.
  \end{cor}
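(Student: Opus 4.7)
The plan is to reduce affine equivalence to a pure ``pre-composition'' equivalence so that Proposition~\ref{prop:boettcher1} applies directly, and then to use Slodkowski's $\lambda$-lemma to upgrade the resulting holomorphic motion on $J_R(f)$ to a quasiconformal self-map of $\C$ whose dilatation can be made as close to $1$ as we wish.

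First, I would rewrite $\psi\circ f=g\circ\phi$ as $g=\psi\circ f\circ\phi^{-1}$ and introduce the affine map $\alpha:=\phi^{-1}\circ\psi$. A short check (unwinding both sides) shows that whenever $\sigma\colon\C\to\C$ satisfies $\sigma\circ f=(f\circ\alpha)\circ\sigma$ on a set $E$, the composition $\theta:=\psi\circ\sigma$ satisfies $\theta\circ f=g\circ\theta$ on $E$. Since $\psi$ is affine, and therefore $1$-quasiconformal, $\theta$ inherits the maximal dilatation of $\sigma$. Thus it suffices to produce, for any $K>1$, a $K$-quasiconformal $\sigma\colon\C\to\C$ that conjugates $f$ to $f\circ\alpha$ on $J_R(f)$ for some $R>0$.

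Next, writing $\alpha(z)=Az+B$ with $A\neq 0$ and choosing a branch of $\log A$, I would embed $\alpha$ in the analytic family of nonconstant affine maps $\phi_\lambda(z):=e^{\lambda\log A}\,z+\lambda B$, so that $\phi_0=\id$ and $\phi_1=\alpha$. For a parameter $T>1$ to be chosen below, I apply Proposition~\ref{prop:boettcher1} with parameter set $N:=\{\lambda\in\C:|\lambda|\leq T\}$. This yields an $R>0$ and a family of injections $\theta^\lambda\colon J_R(f)\to J(f_\lambda)$ which is a holomorphic motion over the open disk $\{|\lambda|<T\}$, with $\theta^0=\id$ and $\theta^\lambda\circ f=f_\lambda\circ\theta^\lambda$.

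Finally, I would invoke Slodkowski's $\lambda$-lemma to extend this motion to a holomorphic motion $\Theta^\lambda\colon\C\to\C$, with $\Theta^\lambda$ being $\tfrac{T+|\lambda|}{T-|\lambda|}$-quasiconformal for $|\lambda|<T$. Choosing $T$ large enough that $\tfrac{T+1}{T-1}\leq K$ and setting $\sigma:=\Theta^1$ gives the desired $K$-quasiconformal conjugacy between $f$ and $f_1=f\circ\alpha$ on $J_R(f)$, whence $\theta:=\psi\circ\sigma$ is the map required by the corollary. The one technical point to watch is that Proposition~\ref{prop:boettcher1} guarantees analytic dependence only on the interior of $N$, so $\lambda=1$ must lie strictly inside the disk; enlarging $T$ (at the cost of enlarging $R$) is precisely the mechanism that lets us drive the dilatation of $\theta$ arbitrarily close to $1$, which is the only non-routine aspect of the argument.
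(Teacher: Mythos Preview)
Your proposal is correct and follows essentially the same route as the paper: reduce (by an affine conjugacy of $g$) to the case $g=f\circ\alpha$, embed $\alpha$ in an analytic one-parameter family $\phi_\lambda(z)=e^{\lambda\log A}z+\lambda B$ with $\phi_0=\id$, apply Proposition~\ref{prop:boettcher1} over a large disk of parameters, and then extend the resulting holomorphic motion to a quasiconformal map of $\C$ with dilatation at most $(T+1)/(T-1)$. The only cosmetic differences are that the paper invokes the Bers--Royden extension theorem rather than Slodkowski's $\lambda$-lemma, and fixes the radius $D=(K+1)/(K-1)$ of the parameter disk from the outset rather than choosing $T$ at the end; neither affects the argument.
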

 \begin{proof}
   By conjugating $g$ with a M\"obius transformation, we may assume
    for simplicity that $g = f\circ M$, where
    $M(z)=e^{A}z + B$ for suitable $A,B\in\C$.
   Define affine functions 
     $\phi_{\lambda}(z) := e^{\lambda A}z + \lambda B$ 
    and consider the family 
     \[ f_{\lambda}(z) := f(e^{\lambda A}z + \lambda B). \]
   Then $f_0=f$ and $f_1=g$.

  Set $D:=(K+1)/(K-1)$ and $N:= \{\lambda:|\lambda|\leq D\}$.
   Let $R>0$ and $\theta^{\lambda}:J_R(f)\to J(f_{\lambda})$,
   $\lambda\in N$, 
   be as in
   Proposition~\ref{prop:boettcher1}.

  It is well-known that a holomorphic motion such as
   $\theta^{\lambda}$ is quasiconformal as a function of $z$,
   and there is a bound on the dilatation of $\theta^{\lambda}$
   in terms of the parameter $\lambda$. 
  More precisely: if $\lambda\in \operatorname{int}(N)$, 
   then 
   $\theta^{\lambda}$ extends to a $K_{\lambda}$-quasiconformal
   map $\theta^{\lambda}:\C\to\C$ by \cite[Theorem 1]{bersroyden}, where
     \[ K_{\lambda} \leq \frac{D+|\lambda|}{D-|\lambda|}. \]
   In particular, by the definition of $D$,
   the map $\theta_1$ extends to a
   $K$-quasiconformal map $\theta_1:\C\to\C$, as desired.
 \end{proof}

 \section{Proof of Theorem \ref{thm:mainrigidity}} \label{sec:mainproof}

Let $f$ be a transcendental entire function. For $R\geq 0$, we defined
 the set $J_R(f)$ in the introduction; let us also set 
\[I_R(f) := \{z \in I(f): |f^n(z)|\geq R\text{ for all $n\geq 1$}\}. \]

 \begin{lem}
 \label{lem:escapingdimension}
  Let $f$ be a transcendental entire function and $R>0$. Then
   $\dim I(f) = \dim I_R(f)$ and
   $\dim I(f)\cap J(f) = \dim I_R(f)\cap J(f)$. 
 \end{lem}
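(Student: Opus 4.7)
The plan is to establish both equalities via a standard decomposition. The inclusions $I_R(f) \subset I(f)$ and $I_R(f) \cap J(f) \subset I(f) \cap J(f)$ give one direction immediately. For the reverse inequalities, the key observation is that if $z \in I(f)$ then $|f^n(z)| \to \infty$, so there exists some $N \geq 0$ with $|f^n(z)| \geq R$ for all $n \geq N$; equivalently $f^N(z) \in I_R(f)$, whence $z \in f^{-N}(I_R(f))$. This yields
\[ I(f) = \bigcup_{N=0}^\infty f^{-N}(I_R(f)). \]
Since Hausdorff dimension is stable under countable unions, it therefore suffices to show $\dim f^{-N}(I_R(f)) \leq \dim I_R(f)$ for every $N$.

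I would reduce this by induction on $N$ to the one-step claim: for any $E \subset \C$, $\dim f^{-1}(E) \leq \dim E$. To prove the claim, split $f^{-1}(E)$ according to whether the point is a critical point of $f$. Since $f'$ is a non-identically-zero entire function, its zero set is discrete in $\C$ and hence countable, contributing dimension $0$. On the complement, every point has an open neighbourhood $U$ on which $f$ is injective and both $f|_U$ and the corresponding branch of $f^{-1}$ are Lipschitz; covering this complement by countably many such $U$, each piece $f^{-1}(E) \cap U$ is a bi-Lipschitz image of a subset of $E$ and hence has dimension at most $\dim E$.

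For the statement about $I(f) \cap J(f)$, I would use complete invariance of $J(f)$, which gives $f^{-N}(I_R(f) \cap J(f)) = f^{-N}(I_R(f)) \cap J(f)$, so the analogous decomposition
\[ I(f) \cap J(f) = \bigcup_{N=0}^\infty f^{-N}(I_R(f) \cap J(f)) \]
holds and the preceding argument applies verbatim. The only technical subtlety is the handling of critical points, but since they form a discrete, hence countable, set they do not affect Hausdorff dimension; I do not anticipate any serious obstacle.
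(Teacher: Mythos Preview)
Your proof is correct and follows essentially the same approach as the paper: the decomposition $I(f)=\bigcup_{N\geq 0} f^{-N}(I_R(f))$ together with countable stability of Hausdorff dimension and the fact that preimages under non-constant holomorphic maps do not increase dimension. The paper simply cites ``preservation of Hausdorff dimension under holomorphic functions'' as a known fact, whereas you spell out the argument (countable critical set plus local bi-Lipschitz inverse branches); your treatment of the $J(f)$ case via complete invariance is also what the paper has in mind when it says ``analogously''.
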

 \begin{proof}
  Every point in $I(f)$ eventually maps to a point in $I_R(f)$, so we have
    \[ I(f) = \bigcup_{j\geq 0} f^{-j}(I_R(f)), \]
   and analogously for $I(f)\cap J(f)$.
   The conclusion follows from 
    the preservation of Hausdorff dimension under holomorphic functions and
    the countable stability of Hausdorff dimension. 
 \end{proof}

 \begin{proof}[Proof of Theorem \ref{thm:mainrigidity}]
  Suppose that $f,g \in \B$ are affinely equivalent. 
  We want to show that $\dim I(f) = \dim I(g)$. 
  By symmetry, it is sufficient to show that $\dim I(g)\geq \dim I(f)$. 
  We shall use the following 
  result of 
  Gehring and V\"ais\"al\"a \cite[Theorem 8]{gehringvaisala}: 
  if $\theta$ is a $K$-quasi-conformal map and $A\subset\C$, then
     \begin{equation} 
      \dim(\theta(A)) \geq \dim\theta(A)/K. \label{eqn:distortion}
     \end{equation}
  (The optimal bounds on the distortion of Hausdorff dimension under
   quasiconformal mappings in dimension $2$ are given by Astala's distortion
   theorem 
   \cite{astaladistortion}.) 

  So, let $K>1$. 
   It follows from Corollary~\ref{cor:boettcher} that there exist $R>0$ and a 
   $K$-quasiconformal map $\theta:\C\to\C$ such that
      $\theta(f(z)) = g(\theta(z))$
    for all $z\in J_{R}(f)$.  
    
    For
   $z\in I_{R}(f)\subset J_R(f)$, we have
     \[ g^n(\theta(z)) = \theta(f^n(z)) \to \infty, \]
    and hence, by (\ref{eqn:distortion}) and
    Lemma~\ref{lem:escapingdimension},
   \[ \dim I(g)\geq \dim \theta(I_R(f)) \geq
        \dim(I_R(f))/K. \]
   Since $K$ was chosen arbitrarily close to $1$, this completes the proof.
 \end{proof}

 \section{Proof of Corollary~\ref{cor:escapingdim}} \label{sec:escapingdim}
 The first statement in Corollary~\ref{cor:escapingdim} follows from
  \eqref{continua}. Recall that the second claim is that, 
  for each $d \in [1,2]$, 
  there exists a function $f\in\B$
  for which the Hausdorff dimension of the escaping set is equal to $d$,
  and that furthermore if $d>1$, the function can be chosen such that
  $\dim J(f)=d$.

 In the case $d=1$, this follows from Corollary 
  \ref{cor:Fkappa}, while for $d=2$ we can use $f(z)=\exp(z)$ and use 
  McMullen's result \cite{hausdorffmcmullen}
  that $\dim I(f)=2$. (In fact, by
   \cite{baranskihausdorff,hendrikthesis}, 
    any entire function $f\in\B$ of finite order has the desired properties
    for $d=2$.)

 The remaining cases are covered by the following statement, which is
  a consequence of the results of \cite{stallardentirehausdorff4}
  and \cite{bergweilerkarpinskastallard}.

  \begin{prop}[Escaping sets of dimension between one and two]
   For each $d\in (1,2)$, there exists a family of functions
    $f_{d,K}$, $K\in\R$, in the class $\B$ such that
      \[ \dim J(f_{d,K})=  \dim I(f_{d,K}) = d. \]
  \end{prop}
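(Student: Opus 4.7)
The plan is to combine the two cited results essentially directly. For each $d\in(1,2)$, the paper \cite{stallardentirehausdorff4} produces a one-parameter family $\{f_{d,K}\}_{K\in\R}$ of entire functions in $\B$ with $\dim J(f_{d,K})=d$; I would simply take this as the required family. The upper bound is then free: since $f_{d,K}\in\B$, the inclusion $I(f_{d,K})\subset J(f_{d,K})$ from \eqref{eqn:classB} and monotonicity of Hausdorff dimension yield $\dim I(f_{d,K})\le\dim J(f_{d,K})=d$.

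For the matching lower bound $\dim I(f_{d,K})\ge d$, I see two natural routes. The more concrete one is to appeal to \cite{bergweilerkarpinskastallard}, which gives a quantitative lower bound on $\dim I(f)$ for functions in $\B$ of ``small'' infinite order, expressed in terms of the growth rate of $\log\log\log M(r,f)$. The functions of \cite{stallardentirehausdorff4} have explicitly controlled growth, tuned to the target dimension $d$, so an inspection should show that the Bergweiler--Karpi\'nska--Stallard bound is at least $d$. The more conceptual route is to re-examine Stallard's proof of $\dim J(f_{d,K})\ge d$ in \cite{stallardentirehausdorff4}: as pointed out in the paragraph following \eqref{dimB}, her lower bounds are typically obtained by estimating the dimension of a Cantor-like subset that already sits inside $I(f)$, so the same construction delivers $\dim I(f_{d,K})\ge d$ at no extra cost.

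Combining the two bounds gives $\dim I(f_{d,K})=d=\dim J(f_{d,K})$ for every $K\in\R$, as required. The main obstacle, such as it is, is bookkeeping: one must locate the precise growth hypothesis used in \cite{bergweilerkarpinskastallard}, or the explicit escaping property of the Cantor set in \cite{stallardentirehausdorff4}, and verify that the family $f_{d,K}$ satisfies it with the constants optimised so as to yield dimension exactly $d$ rather than something smaller. As a useful sanity check, Theorem \ref{thm:mainrigidity} already forces $\dim I(f_{d,K})$ to be constant in $K$ whenever the $f_{d,K}$ are mutually affinely equivalent, so in practice it suffices to verify the lower bound for a single value of $K$.
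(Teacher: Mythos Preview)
Your approach is essentially the paper's: take Stallard's family $f_{d,K}=f_p-K$ from \cite{stallardentirehausdorff4}, get the upper bound from $I\subset J$ via \eqref{eqn:classB}, and obtain the matching lower bound from the growth-based estimate of \cite{bergweilerkarpinskastallard} (your ``more concrete'' route is exactly what the paper does, with $q=1+p=1/(d-1)$). One small correction: \cite{stallardentirehausdorff4} only gives $\dim J(f_{d,K})=d$ for \emph{sufficiently large} $K$, not for every $K\in\R$, so your final claim $\dim J(f_{d,K})=d$ for all $K$ is not justified; the paper accordingly states the conclusion only for large $K$ (your observation that Theorem~\ref{thm:mainrigidity} makes $\dim I(f_{d,K})$ constant in $K$ is correct, but there is no such rigidity for $\dim J$).
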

 \begin{proof}
  Set $p=(2-d)/(d-1)$, and define the family by 
 \[
  f_{d,K}(z) := f_p(z) - K \quad \mbox{ for } K\in\R, 
 \]
 where
    \[ f_p(z) := \frac{1}{2 \pi i}  \int_{L_p} \frac{\exp(e^{(\log t)^{1+p})}}{t-z} dt,\]
with $L_p$ being the boundary of the region
\[
G_p = \{z = z + iy: |y| \leq \pi x/[(1+p)(\log (x))^p], \; x \geq 3 \},
\]
described in a clockwise direction, for $z \in \C \setminus \cl{G_p}$. 
 As with the function $F_0$ defined in the introduction,
 $f_p$ can be defined by analytic continuation for $z \in \cl{G_p}$.

 It was shown in \cite{stallardentirehausdorff4} that
  $f_{d,K}\in\B$ and that 
   \[ \dim J(f_{d,K}) = 1 + \frac{1}{1+p} = d \]
  for sufficiently large $K$. By~\eqref{eqn:classB}, we have 
   (again, for large $K$), 
 \begin{equation}\label{bound}
  \dim I(f_{d,K}) \leq \dim J(f_{d,K}) = d.
 \end{equation}

  On the other hand, the following result  was proved 
   in~\cite{bergweilerkarpinskastallard}. Suppose that 
   $f \in \B$, $q\geq 1$, and that, for each $\eps>0$, there exists 
   $r_\eps>0$ such that
 \begin{equation}\label{a}
   |f(z)|\leq \exp\left( \exp\left(\left(\log|z|\right)^{q+\eps}\right)\right)
   \quad \text{for } |z|\geq r_\eps.
  \end{equation}
  Then
    $\dim I(f)\geq 1+\frac{1}{q}$.

 The functions $f_{d,K}$ satisfy the above assumptions for
  $q=1+p = 1/(d-1)$, and hence 
 \[
  \dim I(f_{d,K}) \geq 1 + \frac1q = d 
 \]
  for all $K$.
 Together with \eqref{bound}, this implies that, for large $K$,
 \[
  \dim I(f_{d,K}) = \dim J(f_{d,K}) = d,
 \]
 as claimed.
\end{proof}

 \section{Eventual dimension} \label{sec:edim}
  Recall that the 
  {\it eventual dimension} of an entire
   function $f$ was defined in the introduction as
      \[ \edim(f) = \inf_{R>0} \dim J_R(f). \]
 We begin with the following two results.

   \begin{lem} \label{lem:edim}
    Let $f$ be a transcendental entire function.
     Then $\dim(I(f)\cap J(f))\leq \edim(f) \leq \dim J(f)$.
   \end{lem}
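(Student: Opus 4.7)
The plan is to handle the two inequalities separately. The right-hand inequality $\edim(f) \leq \dim J(f)$ is immediate from the definitions: for every $R > 0$ we have $J_R(f) \subseteq J(f)$, so $\dim J_R(f) \leq \dim J(f)$, and taking the infimum over $R$ preserves this bound.

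For the left-hand inequality $\dim(I(f) \cap J(f)) \leq \edim(f)$, it suffices to show that $\dim(I(f) \cap J(f)) \leq \dim J_R(f)$ for every fixed $R>0$ and then pass to the infimum. Fix $R>0$ and take $z \in I(f) \cap J(f)$. Since $|f^n(z)| \to \infty$, there exists some $N = N(z) \geq 0$ such that $|f^{n}(z)| \geq R$ for all $n \geq N+1$. By complete invariance of the Julia set, $f^N(z) \in J(f)$; and by the choice of $N$, $|f^{m}(f^N(z))| = |f^{N+m}(z)| \geq R$ for every $m \geq 1$. Hence $f^N(z) \in J_R(f)$, which gives the inclusion
\[ I(f) \cap J(f) \;\subseteq\; \bigcup_{N \geq 0} f^{-N}\bigl(J_R(f)\bigr). \]

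Now I would invoke the same two facts used in the proof of Lemma~\ref{lem:escapingdimension}: Hausdorff dimension is preserved under holomorphic mappings (so each set $f^{-N}(J_R(f))$ has dimension at most $\dim J_R(f)$), and Hausdorff dimension is countably stable. Applied to the displayed inclusion, this gives $\dim(I(f) \cap J(f)) \leq \dim J_R(f)$, and taking the infimum over $R > 0$ yields $\dim(I(f) \cap J(f)) \leq \edim(f)$, completing the proof.

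There is no real obstacle here; the argument is a direct application of the definitions plus standard properties of Hausdorff dimension. The only point that deserves a sentence of justification is the forward-invariance of $J(f)$ used to conclude $f^N(z) \in J(f)$, so that $f^N(z)$ lies in $J_R(f)$ rather than merely satisfying the escape condition.
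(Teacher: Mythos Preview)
Your proof is correct and follows essentially the same approach as the paper. The only cosmetic difference is that the paper invokes Lemma~\ref{lem:escapingdimension} to obtain $\dim(I(f)\cap J(f)) = \dim(I_R(f)\cap J(f)) \leq \dim J_R(f)$ directly, whereas you unpack that lemma's content inline via the inclusion $I(f)\cap J(f)\subseteq \bigcup_{N\geq 0} f^{-N}(J_R(f))$ and the same two standard facts about Hausdorff dimension.
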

   \begin{proof}
    The second inequality holds by definition. On the other hand, by 
     Lemma \ref{lem:escapingdimension}, we have 
   \[ \dim(I(f)\cap J(f)) = \dim( I_R(f) \cap J(f) ) = 
      \dim (J_R(f)\cap I(f)) \]
     for all $R\geq 0$.
     Hence $\dim I(f)\cap J(f)\leq \dim J_R(f)$ for all
     $R\geq 0$ and thus $\dim I(f)\cap J(f)\leq \edim(f)$, as claimed.
   \end{proof}

  \begin{thm} \label{thm:edim2}
   Suppose that $f,g\in \B$ are affinely equivalent. Then $\edim(f)=\edim(g)$.
  \end{thm}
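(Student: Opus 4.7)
The plan is to mimic the proof of Theorem~\ref{thm:mainrigidity}, applying the argument to the sets $J_R(f)$ themselves rather than only to the escaping set. By symmetry it suffices to show $\edim(g)\geq\edim(f)$. Fix $K>1$ and invoke Corollary~\ref{cor:boettcher} to obtain $R_0>0$ and a $K$-quasiconformal map $\theta:\C\to\C$ that conjugates $f$ to $g$ on $J_{R_0}(f)$.

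The first step is to establish that, for each $R\geq R_0$, one has $\theta(J_R(f))\subseteq J_{R'}(g)$ for some $R'=R'(R)$ with $R'(R)\to\infty$ as $R\to\infty$. Since $\theta$ is a quasiconformal homeomorphism of $\C$, it extends to a homeomorphism of $\hat{\C}$ fixing $\infty$, and consequently $\theta(\{|w|\geq R\})$ is a neighbourhood of $\infty$ containing some $\{|v|\geq R'(R)\}$, with $R'(R)\to\infty$ as $R\to\infty$. Because $J_R(f)\subseteq J_{R_0}(f)$ is forward invariant, the conjugacy propagates along orbits: $g^n(\theta(z))=\theta(f^n(z))$ for every $n\geq 1$ and every $z\in J_R(f)$, so $|f^n(z)|\geq R$ forces $|g^n(\theta(z))|\geq R'(R)$. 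Combined with $\theta(J_R(f))\subseteq J(g)$, which is built into Proposition~\ref{prop:boettcher1}, this yields the claimed containment.

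The remaining step is a direct dimension estimate. The Gehring--V\"ais\"al\"a inequality \eqref{eqn:distortion} gives
\[ \dim J_{R'(R)}(g)\geq \dim\theta(J_R(f))\geq \dim J_R(f)/K. \]
Since $J_{R'}(f)\subseteq J_R(f)$ whenever $R\leq R'$, the quantity $\dim J_R(f)$ is monotone decreasing in $R$, and letting $R\to\infty$ (so also $R'(R)\to\infty$) converts both sides into eventual dimensions: $\edim(g)\geq \edim(f)/K$. Sending $K\to 1^+$ and invoking symmetry between $f$ and $g$ then completes the proof.

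The main obstacle I anticipate is the containment $\theta(J_R(f))\subseteq J_{R'(R)}(g)$ together with $R'(R)\to\infty$: although this is a soft topological consequence of $\theta$ extending to a homeomorphism of the sphere fixing infinity, it is what allows the pointwise inequality $\dim J_{R'(R)}(g)\geq \dim J_R(f)/K$ to be upgraded, via the monotonicity of $R\mapsto \dim J_R(f)$, to a statement about the infima $\edim(g)$ and $\edim(f)$ rather than about generic truncations of the Julia sets.
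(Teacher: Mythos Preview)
Your proof is correct and follows essentially the same route as the paper: invoke Corollary~\ref{cor:boettcher} to obtain a $K$-quasiconformal conjugacy $\theta$, use the fact that $\theta$ is a homeomorphism of the sphere fixing $\infty$ to get $\theta(J_{R}(f))\subset J_{R'}(g)$ for suitable $R,R'$, apply the Gehring--V\"ais\"al\"a bound \eqref{eqn:distortion}, and let $K\to 1$. One small slip: you need $\theta(\{|w|\geq R\})$ to be \emph{contained in} (not to contain) some set $\{|v|\geq R'(R)\}$ with $R'(R)\to\infty$; this is precisely what yields the implication ``$|f^n(z)|\geq R$ forces $|g^n(\theta(z))|\geq R'(R)$'' that you correctly use immediately afterwards, and it follows just as easily from continuity of $\theta$ at $\infty$.
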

  \begin{proof}
The proof is very similar to the proof of Theorem~\ref{thm:mainrigidity}. Again, we need only show that
    $\edim(g)\geq \edim(f)$. 
    
    Let $K>1$. 
    It follows from Corollary~\ref{cor:boettcher} that there exist $R>0$ and a 
    $K$-quasi\-con\-formal map $\theta:\C\to\C$ such that
     \begin{equation}\label{theta} \theta(f(z)) = g(\theta(z)) \end{equation}
    for all $z\in J_R(f)$.

    Now let $S>0$ and choose $R'\geq R$ sufficiently large that
     $|\theta(z)|\geq S$ whenever $|z|\geq R'$. Then it follows from 
     \eqref{theta}  that
      $\theta(J_{R'}(f))\subset J_S(g)$. Therefore, by~\eqref{eqn:distortion}
      and Lemma~\ref{lem:escapingdimension}, 
    \[ \dim J_S(g) \geq \dim \theta (J_{R'}(f)) \geq
       \dim J_{R'}(f)/K \geq \edim(f)/K. \]
   Since $K$ can be chosen arbitrarily close to $1$,
    we see that $\dim J_S(g) \geq \edim(f)$. 
    Finally, since $S$ was arbitrary, it follows
    that $\edim(g)\geq \edim(f)$, as required.
   \end{proof}

We end this section by proving Theorem~\ref{thm:edim}. 

\begin{proof}[Proof of Theorem \ref{thm:edim}]
 Let $d\in [1,2]$.
  By Theorems \ref{thm:mainrigidity} and  \ref{thm:edim2}, we only
  need to show that there is a function $f\in\B$ with
  $\dim I(f) = \edim(f) = d$. If $d>1$, then this follows from
  Corollary \ref{cor:escapingdim} together 
  with (\ref{eqn:classB}) and Lemma~\ref{lem:edim}. 

 For $d=1$, we again consider the functions $F_{\kappa}$, $\kappa \in \C$, 
  defined in the Introduction. It follows from \eqref{continua}, \eqref{dim1}, Lemma~\ref{lem:edim} and Theorem~\ref{thm:edim2} that
\[
 \edim(F_{\kappa}) = 1 \quad \text{ for all $\kappa \in \C$}.
\]
Also, by Corollary \ref{cor:Fkappa}, 
\[
 \dim I(F_{\kappa}) = 1 \quad \text{ for all $\kappa \in \C$}.
\]
This completes the proof.
\end{proof}

\bibliographystyle{hamsplain}
\bibliography{/Latex/Biblio/biblio}

\providecommand{\href}[2]{#2}\def\polhk#1{\setbox0=\hbox{#1}{\ooalign{\hidewid%
th \lower1.5ex\hbox{`}\hidewidth\crcr\unhbox0}}}
  \def\polhk#1{\setbox0=\hbox{#1}{\ooalign{\hidewidth\lower1.5ex\hbox{`}\hidew%
idth\crcr\unhbox0}}} \input{cyracc.def} \def\j{{\u i}} \def\J{{\u I}}
  \newfont{\cyrit}{wncyi10 at 12pt}\def\cprime{$'$}
\providecommand{\bysame}{\leavevmode\hbox to3em{\hrulefill}\thinspace}
\begin{thebibliography}{MmM}

\bibitem[A]{astaladistortion}
Kari Astala, \emph{Area distortion of quasiconformal mappings}, Acta Math.
  \textbf{173} (1994), no.~1, 37--60.

\bibitem[Bak]{bakerdomainsofnormality}
I.~Noel Baker, \emph{The domains of normality of an entire function}, Ann. Acad.
  Sci. Fenn. Ser. A I Math. \textbf{1} (1975), no.~2, 277--283.

\bibitem[Bar]{baranskihausdorff}
Krzysztof Bara{\'n}ski, \emph{Hausdorff dimension of hairs and ends for entire
  maps of finite order}, Math. Proc. Cambridge Philos. Soc. \textbf{145}
  (2008), no.~3, 719--737.

\bibitem[BKZ]{baranskikarpinskazdunik}
Krzysztof Bara{\'n}ski, Bogus{\l}awa Karpi{\'n}ska, and Anna Zdunik,
  \emph{Hyperbolic dimension of {J}ulia sets of meromorphic maps with
  logarithmic tracts}, Int. Math. Res. Not. IMRN (2008), Art. ID rnn141, 10,
  \mbox{\href{http://www.arXiv.org/abs/0711.2672v1}{arXiv:0711.2672v1}}.


\bibitem[Be1]{waltermero}
Walter Bergweiler, \emph{Iteration of meromorphic functions}, Bull. Amer. Math. Soc.
  (N.S.) \textbf{29} (1993), no.~2, 151--188,
  \mbox{\href{http://www.arXiv.org/abs/math.DS/9310226}{arXiv:math.DS/9310226}%
}.

\bibitem[Be2]{waltervorlesung}
\bysame, \emph{Komplexe Dynamik}, Lecture notes, fall 2008,
  http://analysis.math.uni-kiel.de/vorlesungen/dynamik.08/Gesamt.pdf.


\bibitem[BKS]{bergweilerkarpinskastallard}
Walter Bergweiler, Bogus{\l}awa Karpi\'nska and Gwyneth~M. Stallard, \emph{The
  growth rate of an entire function and the {H}ausdorff dimension of its
  {J}ulia set}, Preprint, 2008,
  \mbox{\href{http://www.arXiv.org/abs/0807.2363}{arXiv:0807.2363}}.

\bibitem[BK]{bergweilerkotus}
Walter Bergweiler and Janina Kotus, \emph{On the {H}ausdorff dimension of the
  escaping set of certain meromorphic functions}, Preprint, 2009,
  \mbox{\href{http://www.arXiv.org/abs/0901.3014}{arXiv:0901.3014}}.

\bibitem[BRS]{walterphilgwyneth}
Walter Bergweiler, Philip~J. Rippon and Gwyneth~M. Stallard, \emph{Dynamics of
  meromorphic functions with direct or logarithmic singularities}, Proc.\
  London Math.\ Soc. \textbf{97} (2008), no.~2, 368--400,
  \mbox{\href{http://www.arXiv.org/abs/0704.2712}{arXiv:0704.2712}}.

\bibitem[BR]{bersroyden}
Lipman Bers and Halsey~L. Royden, \emph{Holomorphic families of injections},
  Acta Math. \textbf{157} (1986), no.~3-4, 259--286.


\bibitem[EL]{alexmisha}
Alexandre~{\`E}. Eremenko and Mikhail~Yu. Lyubich, \emph{Dynamical properties
  of some classes of entire functions}, Ann. Inst. Fourier (Grenoble)
  \textbf{42} (1992), no.~4, 989--1020.

\bibitem[GV]{gehringvaisala}
Frederick~W. Gehring and Jussi~V{\"a}is{\"a}l{\"a}, \emph{Hausdorff dimension and
  quasiconformal mappings}, J. London Math. Soc. (2) \textbf{6} (1973),
  504--512.

\bibitem[H]{hubbardteichmueller}
John~H. Hubbard, \emph{Teichm\"uller theory and applications to geometry,
  topology, and dynamics. {V}ol. 1}, Matrix Editions, Ithaca, NY, 2006.

\bibitem[KU]{kotusurbanskiradialescapingmeromorphic}
Janina Kotus and Mariusz Urba\'nski, \emph{Hausdorff dimension of radial and
  escaping points for transcendental meromorphic functions}, Preprint, 2007, to
  appear in Illinois J. Math.

\bibitem[McM]{hausdorffmcmullen}
Curtis~T. McMullen, \emph{Area and {H}ausdorff dimension of {J}ulia sets of
  entire functions}, Trans. Amer. Math. Soc. \textbf{300} (1987), no.~1,
  329--342.

\bibitem[R]{boettcher}
Lasse Rempe, \emph{Rigidity of escaping dynamics for transcendental entire
  functions}, Preprint, 2006,
  \mbox{\href{http://arxiv.org/abs/math.DS/0605058}{arXiv:math.DS/0605058}}, to
  appear in Acta Math.

\bibitem[RS]{ripponstallardfinitepoles}
Philip~J. Rippon and Gwyneth~M. Stallard, \emph{Escaping points of meromorphic functions
  with a finite number of poles}, J. Anal. Math. \textbf{96} (2005), 225--245.

\bibitem[Sch]{hendrikthesis}
Hendrik Schubert, \emph{{\"U}ber die {H}ausdorff-{D}imension der {J}uliamenge
  von {F}unktionen endlicher {O}rdnung}, doctoral thesis,
  Christian-Albrechts-Universit\"at Kiel, 2007,
  \href{http://eldiss.uni-kiel.de/macau/receive/dissertation_diss_00002124}{\tt
  http://eldiss.uni-kiel.de/macau/receive/dissertation\_diss\_00002124}.

\bibitem[S1]{stallardentirehausdorff1}
Gwyneth~M. Stallard, \emph{The {H}ausdorff dimension of {J}ulia sets of entire
  functions}, Ergodic Theory Dynam. Systems \textbf{11} (1991), no.~4,
  769--777.

\bibitem[S2]{stallardmeromorphichausdorff}
\bysame, \emph{The {H}ausdorff dimension of {J}ulia sets of meromorphic
  functions}, J. London Math. Soc. (2) \textbf{49} (1994), no.~2, 281--295.

\bibitem[S3]{stallardentirehausdorff2}
\bysame, \emph{The {H}ausdorff dimension of {J}ulia sets of entire functions
  {II}}, Math. Proc. Cambridge Philos. Soc. \textbf{119} (1996), no.~3,
  513--536.

\bibitem[S4]{stallardmeromorphichausdorff2}
\bysame, \emph{The {H}ausdorff dimension of {J}ulia sets of meromorphic
  functions {II}}, J. London Math. Soc. (2) \textbf{60} (1999), no.~3,
  847--859.

\bibitem[S5]{stallardentirehausdorff4}
\bysame, \emph{The {H}ausdorff dimension of {J}ulia sets of entire functions
  {IV}}, J. London Math. Soc. (2) \textbf{61} (2000), no.~2, 471--488.

\bibitem[S6]{gwynethdimensionsurvey}
\bysame, \emph{Dimensions of {J}ulia sets of transcendental meromorphic
  functions}, Transcendental dynamics and complex analysis, London Math. Soc.
  Lecture Note Ser., vol. 348, Cambridge Univ. Press, Cambridge, 2008,
  pp.~425--446.

\end{thebibliography}

\end{document}